\numberwithin{equation}{section}
\newtheorem{Theorem}{Theorem}[section]
\newtheorem{Corollary}[Theorem]{Corollary}
\newtheorem{Conjecture}[Theorem]{Conjecture}
\newtheorem{Lemma}[Theorem]{Lemma}
\newtheorem{Proposition}[Theorem]{Proposition}
\newtheorem{Question}[Theorem]{Question}
\newtheorem{problem}[Theorem]{Problem}
\theoremstyle{definition}
\newtheorem{Definition}[Theorem]{Definition}
\newtheorem{Example}[Theorem]{Example}
\newtheorem{Remark}[Theorem]{Remark}
\newtheorem{observation}[Theorem]{Observation}
\DeclareMathOperator{\Tr}{Tr}
\DeclareMathOperator{\sgn}{sgn}
\DeclareMathOperator{\diag}{diag}
\newcommand{\bfy}{\mathbf{y}}
\newcommand{\ZZ}{\mathbb{Z}}
\newcommand{\CC}{\mathbb{C}}
\newcommand{\ot}{\otimes}
\newcommand{\SG}{\mathfrak{S}}
\newsavebox\MBox
\newcommand\Cline[2][red]{{\sbox\MBox{$#2$}%
		\rlap{\usebox\MBox}\color{#1}\rule[-1.2\dp\MBox]{\wd\MBox}{0.5pt}}}
\DeclareMathOperator{\uMPS}{uMPS}
\DeclareMathOperator{\Cyc}{Cyc}
\DeclareMathOperator{\Dih}{Dih}
\def\picuMPS{\tikz[baseline=0ex,scale=2]{
		\foreach \x in {0,1,2,4}
		\filldraw[black] (\x+0.25,0.25) circle (2pt);
		\foreach \x in {0,1,2,4}
		\foreach \y in {1}
		\draw (\x+0.25,\y-0.75) -- (\x+0.25,\y);
		\draw (0.25,0.25) -- (3,0.25);
		\draw (3.5,0.25) -- (4.25,0.25);
		\node at (3.25, 0.25)    {$\cdots$};
		\foreach \x in {1,2,3,5}
		\node at (\x - 0.9, 1)  {$n$};
		\foreach \x in {1,2,3}
		\node at (\x - 0.25, 0.4)  {$m$};
		\node at (3.75, 0.4)  {$m$};
		\draw    (0.25,0.25) to[out=180,in=180] (0.25,-0.5);
		\draw    (4.25,0.25) to[out=0,in=0] (4.25,-0.5);
		\draw    (0.25,-0.5) to (4.25,-0.5);
		\node at (2.25, -0.05)   {$d$ sites};
		\node at (2.25, -0.35)  {$m$};
	}
}
\def\picOneTensor{\tikz[baseline=4ex,scale=1.5]{
		\foreach \x in {1}
		\filldraw[black] (\x+0.25,0.25) circle (2pt);
		\foreach \x in {1}
		\foreach \y in {1}
		\draw (\x+0.25,\y-0.75) -- (\x+0.25,\y);
		\draw (0.5,0.25) -- (2,0.25);
		\foreach \x in {2}
		\node at (\x - 0.9, 1)  {$n$};
		\foreach \x in {1,2}
		\node at (\x - 0.25, 0.4)  {$m$};
	}
}
\begin{document}
\allowdisplaybreaks

\newcommand{\arXivNumber}{2204.10363}

\renewcommand{\PaperNumber}{099}

\FirstPageHeading

\ShortArticleName{The Linear Span of Uniform Matrix Product States}

\ArticleName{The Linear Span of Uniform Matrix Product States}

\Author{Claudia DE LAZZARI~$^{\rm a}$, Harshit J.~MOTWANI~$^{\rm b}$ and Tim SEYNNAEVE~$^{\rm c}$}

\AuthorNameForHeading{C.~De~Lazzari, H.J.~Motwani and T.~Seynnaeve}

\Address{$^{\rm a)}$~Dipartimento di Matematica, Universit\`a di Trento,\\
\hphantom{$^{\rm a)}$}~Via Sommarive 14, 38123 Povo (TN), Italy}
\EmailD{\href{mailto:claudia.delazzari@unitn.it}{claudia.delazzari@unitn.it}}

\Address{$^{\rm b)}$~Department of Mathematics: Algebra and Geometry, Ghent University, 9000 Gent, Belgium}
\EmailD{\href{mailto:harshitjitendra.motwani@ugent.be}{harshitjitendra.motwani@ugent.be}}

\Address{$^{\rm c)}$~Department of Computer Science, KU Leuven, Celestijnenlaan 200A, 3001 Leuven, Belgium}
\EmailD{\href{mailto:tim.seynnaeve@kuleuven.be}{tim.seynnaeve@kuleuven.be}}

\ArticleDates{Received June 03, 2022, in final form December 15, 2022; Published online December 21, 2022}

\Abstract{The variety of uniform matrix product states arises both in algebraic geometry as a natural generalization of the Veronese variety, and in quantum many-body physics as a model for a translation-invariant system of sites placed on a ring. Using methods from linear algebra, representation theory, and invariant theory of matrices, we study the linear span of this variety.}

\Keywords{matrix product states; invariant theory of matrices}

\Classification{15A69; 20G05; 81P45}

\section{Introduction}

Tensor networks are a powerful tool for the description of tensors living in a high dimensional space. Since their original conception in quantum many-body physics  \cite{AffKenLieTas:ValenceBondGroundStates,fannes1992finitely,OstRom:ThermoLimitDensityMatrixRenorm}, they have found a wide range of applications in different fields, such as numerical tensor calculus \cite{bachmayr2016tensor,cichocki2016tensor,hackbuschTensorSpaces, oseledets2011tensor}, algebraic complexity theory \cite{DvMaPeYe:Multi_Branching_Programs}, %
graphical models \cite{robevaSeigal}, and machine learning \cite{chen2018equivalence, cichocki2016tensor}.
In the study of these varieties of tensors, some important general results and developments have been achieved using methods from differential and complex geometry \cite{haegeman2014geometry}, algebraic geometry and representation theory \cite{barthel2022closedness,bernardi2022dimension,BucBucMic:HackbushConjecture,ChrGesStWer:Optimization,ChrLucVraWer:TensorNetworkRepGeom,hackl2020geometry, landsbergQiYe}.

In this article, we focus on a particular type of tensor networks: uniform matrix product states. From a quantum mechanics perspective, they model translation-invariant physical systems of sites placed on a ring. The geometry of uniform matrix product states has been extensively studied \cite{Critch2014AlgebraicGO,Czaplinski2019UniformMP,haegeman2014geometry, PerezGarciaVerstraeteMPS}, but our understanding of them is still far from complete, and several fundamental mathematical problems remain open.

Our geometric point of view is the following: we fix a tensor space $(\CC^n)^{\ot d}$, and consider the set of all tensors in this space that admit a translation-invariant matrix product state representation, with a given bond dimension $m$. After taking the closure of this set, we obtain an algebraic variety, which we denote by $\uMPS(m,n,d)$.
The ultimate goal would be to obtain a complete description of this variety, i.e., to find all defining equations. Phrased in this generality, this question is likely to be intractable. Indeed, even just determining which \emph{linear} equations, if any, vanish on our variety is poorly understood. This is precisely the goal of this paper:

\begin{problem} \label{problem: linSpanUMPS}
Describe the linear span $\langle\uMPS(m,n,d)\rangle$ of the variety $\uMPS(m,n,d)$. More precisely:
\begin{itemize}\itemsep=0pt
    \item What is the dimension of $\langle\uMPS(m,n,d)\rangle$? {\rm (}Question~{\rm \ref{question:main})}
     \item For which parameters $m,n,d \in \mathbb{N}$ does $\langle\uMPS(m,n,d)\rangle$ fill the ambient space? {\rm (}Question~{\rm \ref{question:fills})}
    \item How does $\langle\uMPS(m,n,d)\rangle$ decompose as a ${\rm GL}_n$-representation? {\rm (}Question~{\rm \ref{question:repPrecise})}
\end{itemize}
\end{problem}

Note that since taking the Zariski closure of any set does not change its linear span, the space $\langle\uMPS(m,n,d)\rangle$ is the linear span of all tensors that admit a $\uMPS$ representation with bond dimension $m$.

The variety $\uMPS(m,n,d)$ does not only arise from tensor networks, it is also a very natural geometric construction in its own right. Indeed, as we will soon see, $\uMPS(m,n,d)$ is the closed image of the polynomial map that takes as input an $n$-tuple of $m \times m$ matrices, and outputs the traces of all $d$-fold products of the given matrices. In this way, it is a natural generalization of the classical Veronese variety. Our main Problem \ref{problem: linSpanUMPS} is therefore equivalent to the following:
\begin{problem}
Let $A_0,\dots,A_{n-1}$ be $m \times m$ matrices with generic\footnote{More formally: the entries of the $A_i$ are the $nm^2$ variables of a multivariate polynomial ring over $\CC$. See also Definition~\ref{def:TraceAlgebra}.} entries. Which linear relations hold among the polynomials
\[
\Tr(A_{i_1}\cdots A_{i_d}),
\]
where $(i_1,\dots,i_d) \in [n]^d$?
\end{problem}
The ring generated by all polynomials $\Tr(A_{i_1}\cdots A_{i_d})$, where the generic matrices are fixed but we allow $d$ to vary, is known as the \emph{trace algebra}. In his classical work \cite{procesi1976invariant}, Procesi described how to obtain all relations between the generators of this ring in principle. Note however, that the question we are asking is slightly different: we are only interested in relations between traces of matrix products of a \emph{fixed length} $d$.

This article is divided into three sections. In Section~\ref{sec:preliminaries}, we define the variety of uniform matrix product states, and describe its natural symmetries.
In Section~\ref{sec:comp}, we undertake a~computational study of the space $\langle\uMPS(m,n,d)\rangle$ in the smallest nontrivial case ${m=n=2}$. We describe an algorithm that can compute this space, viewed as a ${\rm GL}_2$-representation: more precisely, we exploit and compute its decomposition into ${\rm GL}_2$-submodules. Based on this technique, we obtain a conjectured formula for the character and, in particular, the dimension of the weight spaces, and consequently of the whole space $\langle\uMPS(2,2,d)\rangle$.
Section~\ref{sec:results} contains our main theoretical results.
In Section~\ref{sec:CH}, we introduce a powerful method to find linear equations that vanish on $\langle\uMPS(m,n,d)\rangle$, based on the Cayley--Hamilton theorem:
	\theoremstyle{plain}
	\newtheorem*{thm:linRel}{Theorem \ref{thm:linRel}}
	\begin{thm:linRel}
		Let $A_0,\dots,A_m,B$ be $m \times m$ matrices. Then for every $\ell \in {\mathbb N}$ it holds that
		\begin{equation*}
			\sum_{\sigma \in \mathfrak{S}_m, \tau \in C_{m+1}} \sgn(\sigma)\sgn(\tau)\Tr\big(A_{\tau(0)}B^{\sigma(0)}A_{\tau(1)}B^{\sigma(1)} \cdots A_{\tau(m-1)}B^{\sigma(m-1)}A_{\tau(m)}B^{\ell}\big) =0.
		\end{equation*}
		Here $\mathfrak{S}_m$ is the symmetric group acting on $\{0,1,\dots,m-1\}$, and $C_{m+1}$ is the cyclic group acting on $\{0,1,\dots,m\}$.
	\end{thm:linRel}
	Theorem \ref{thm:linRel} actually gives nontrivial trace relations, i.e., linear relations that do not follow either from cyclic permutations or reflections of the factors.
	As a corollary, we show that, already for $d$ at least quadratic in the bond dimension $m$, the linear span $\langle\uMPS(m,n,d)\rangle$ does not fill its natural ambient space $\Cyc^d(\CC^n)$, the space of cyclically symmetric tensors; significantly improving the state of the art.
	More precisely, the linear span of the space of uniform matrix product states is a proper subspace of the space of cyclically invariant tensors %
	under the following conditions: %
	\theoremstyle{plain}
	\newtheorem*{cor: lin span}{Corollary \ref{cor: lin span}}
	\begin{cor: lin span}
		If $n \geq 3$ and $d \geq \frac{(m+1)(m+2)}{2}$, then ${\uMPS(m,n,d)}$ is contained in a proper linear subspace of the space of cyclically invariant tensors.
	\end{cor: lin span}

In Section~\ref{sec:lineq22d} we study the special case $m=n=2$, based on the computations in Section~\ref{sec:CH}, and, using the trace parametrization, we provide an upper bound on the dimension of $\langle\uMPS(2,2,d)\rangle$ which is close to optimal. Moreover, we take some first steps towards proving our conjectured character formulas: in the first cases, i.e., for the first few weights, we can provide an explicit basis of generators for the weight spaces of the representation, again using our Cayley--Hamilton technique.

\section{Preliminaries} \label{sec:preliminaries}

We once and for all fix three parameters $m,n,d \!\in\! {\mathbb N}$. We will consider tensors in the space~$(\CC^n)^{\ot d}$. The standard basis of $\CC^n$ will be written as $\{e_0,\dots, e_{n-1}\}$. We abbreviate the set $\{0,\dots, n-1\}$ to $[n]$.

\begin{Definition}\label{def: umps}
		The uniform Matrix Product State parametrization is given by the map
		\begin{align}\label{eq:umpsMap}
		\begin{split}
			\phi\colon \  ({\mathbb C}^{m\times m})^n &\to ({\mathbb C}^n)^{\otimes d}, \\
			(A_0 ,\dots, A_{n-1})&\mapsto \sum_{0 \leq i_1,\dots, i_d \leq n-1} \Tr(A_{i_1}\cdots A_{i_d}) \ e_{i_1}\otimes \cdots \otimes e_{i_d}.
			\end{split}
		\end{align}
	\end{Definition}
We denote the image of this map by $\uMPS^{\circ}(m,n,d)$.
The closure of $\uMPS^{\circ}(m,n,d)$ in the Euclidean or equivalently Zariski topology over the complex numbers, is the algebraic variety of uniform matrix product states, denoted by ${\uMPS(m,n,d)}$.

\begin{Remark}
    If we think of $(\CC^{m\times m})^n$ as the space of $m \times m \times n$ tensors, then the $\uMPS$ parametrization takes a tensor in this space and contracts it $d$ times with itself in a circle; see Figure~\ref{fig:uMPS1}.
\end{Remark}

\begin{figure}[h!]
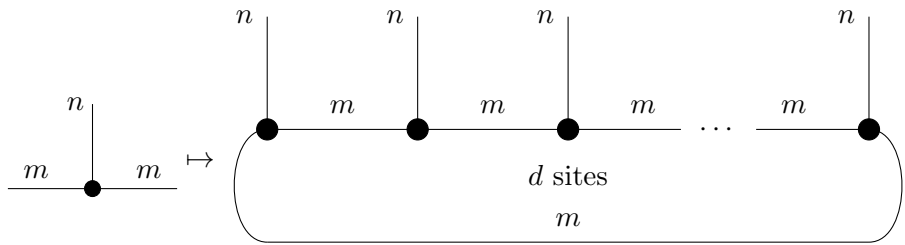

    \centering
    $$
    \picOneTensor \mapsto \picuMPS
    $$
    \caption{Graphical representation of map (\ref{eq:umpsMap}) defining $\uMPS(m,n,d)$. There are in total $d$ tensors of order $m\times m \times n$, contracted along the respective edges. The parameters $m$ and $n$ are called bond and physical dimension, respectively. The bond dimensions associated to the edges of the graph represent the dimension of the vector spaces along which the tensor contraction takes place. }%
    \label{fig:uMPS1}
\end{figure}

\begin{Remark}
An alternative name for $\uMPS$ is \emph{translation invariant matrix product states with periodic boundary conditions} \cite{Critch2014AlgebraicGO, PerezGarciaVerstraeteMPS}. The name ``uniform matrix product states'' is sometimes reserved for the thermodynamic limit, where the number $d$ of sites approaches infinity. Our terminology is consistent with \cite{Czaplinski2019UniformMP, haegeman2014geometry}.
\end{Remark}

As mentioned in the introduction, the main question we will try to answer in this article is the following:
\begin{Question} \label{question:main}
Determine the linear span of $\uMPS(m,n,d)$; i.e., the smallest vector subspace of $(\CC^n)^{\ot d}$ containing $\uMPS(m,n,d)$. In particular: what is the dimension of this space?
\end{Question}

\subsection{Cyclic and symmetric invariance}
The space $(\CC^n)^{\ot d}$ comes equipped with an action of the symmetric group $\SG_d$: for $\sigma \in \SG_d$ and $\omega = v_1 \ot \cdots \ot v_d \in (\CC^n)^{\ot d}$ we have
\[
\sigma \cdot (v_1 \ot \cdots \ot v_d) = v_{\sigma^{-1}(1)} \ot \cdots \ot v_{\sigma^{-1}(d)}.
\]
The symmetric group $\SG_d$ naturally contains the cyclic group $C_d$ and the dihedral group $D_{2d}$ as subgroups. To be precise: let $r,s \in \SG_d$ be the cyclic permutation and reflection defined respectively by
\[
r(i) = i+1 (\operatorname{mod} d) \qquad \text{and} \qquad s(i) = d+1-i,
\]
then $C_d \subseteq \SG_d$ is the cyclic subgroup generated by $r$, and $D_{2d}$ is the subgroup generated by $r$ and $s$.
The \emph{cyclically symmetric tensors} and \emph{dihedrally symmetric tensors} are then the elements of $(\CC^n)^{\ot d}$ that are invariant under the action of these subgroups:
    \begin{gather*}
        \Cyc^d({\mathbb C}^n) := \big\{ \omega \in ({\mathbb C}^n)^{\ot d} \mid \sigma \cdot \omega = \omega \ \forall \sigma \in C_d\big\},\\
        \Dih^d({\mathbb C}^n) := \big\{ \omega \in ({\mathbb C}^n)^{\ot d} \mid \sigma \cdot \omega = \omega \  \forall \sigma \in D_{2d}\big\}.
    \end{gather*}
Note that both are linear subspaces of $(\CC^n)^{\ot d}$, and that $\Dih^d({\mathbb C}^n) \subseteq \Cyc^d({\mathbb C}^n)$, where the inclusion is strict unless $d \leq 2$ or $n=2$ and $d \leq 5$.

\begin{observation}
The set $\uMPS(m,n,d)$ is a subset of the space of cyclically invariant tensors $\Cyc^d({\mathbb C}^n)\subset ({\mathbb C}^n)^{\otimes d}$ because of the trace invariance under cyclic permutations of the matrices: given $M_1,\dots,M_{d}\in {\mathbb C}^{m \times m}$ then
\[
\Tr(M_1\cdots M_{d})= \Tr(M_{\sigma(1)}\cdots M_{\sigma(d)}),
\]
for $\sigma \in C_d$.
\end{observation}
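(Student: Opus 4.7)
The plan is to reduce the claim to a direct computation on the parametrization, and then take closures. First I would pick a tensor $\omega = \phi(A_0,\ldots,A_{n-1}) \in \uMPS^\circ(m,n,d)$, and apply the definition of the $\SG_d$-action: for $\sigma \in C_d$,
\[
\sigma \cdot \omega = \sum_{i_1,\ldots,i_d} \Tr(A_{i_1}\cdots A_{i_d})\, e_{i_{\sigma^{-1}(1)}} \otimes \cdots \otimes e_{i_{\sigma^{-1}(d)}}.
\]
Then I would reindex the sum via $j_k := i_{\sigma^{-1}(k)}$ (equivalently $i_k = j_{\sigma(k)}$), obtaining
\[
\sigma \cdot \omega = \sum_{j_1,\ldots,j_d} \Tr(A_{j_{\sigma(1)}}\cdots A_{j_{\sigma(d)}})\, e_{j_1} \otimes \cdots \otimes e_{j_d}.
\]

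The second step is to invoke the displayed cyclic-invariance of trace quoted in the statement: when $\sigma \in C_d$, one has $\Tr(A_{j_{\sigma(1)}}\cdots A_{j_{\sigma(d)}}) = \Tr(A_{j_1}\cdots A_{j_d})$, so term-by-term the sum equals $\omega$. This shows $\uMPS^\circ(m,n,d) \subseteq \Cyc^d(\bbC^n)$.

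Finally, since $\Cyc^d(\bbC^n)$ is cut out by the linear equations $\sigma \cdot \omega - \omega = 0$ for $\sigma \in C_d$, it is a linear subspace of $(\bbC^n)^{\otimes d}$ and in particular closed in both the Euclidean and Zariski topologies. Taking closures of the inclusion above therefore yields $\uMPS(m,n,d) \subseteq \Cyc^d(\bbC^n)$, as required. There is no real obstacle here: the only substantive ingredient is the well-known cyclic invariance of the trace, and the rest is bookkeeping on indices together with the trivial observation that closed sets are preserved under closure.
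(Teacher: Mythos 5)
Your proof is correct and follows the same route the paper takes: the paper simply asserts the inclusion as a consequence of the cyclic invariance of the trace, and you have filled in the routine index bookkeeping (the reindexing $j_k = i_{\sigma^{-1}(k)}$ is handled correctly) plus the closing remark that $\Cyc^d(\bbC^n)$, being a linear subspace, is closed, so the inclusion survives taking Zariski/Euclidean closure. Nothing is missing.
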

In other words, we can think of $\uMPS(m,n,d)$ as a subvariety of the ambient space $\Cyc^d({\mathbb C}^n)$. As noted in \cite[Corollary 3.18]{Czaplinski2019UniformMP}, if we fix $n$ and $d$ and let $m$ grow, the space $\uMPS(m,n,d)$ will eventually fill this entire ambient space.
\begin{Question} \label{question:fills}
For fixed $n$ and $d$, what is the smallest $m$ such that $\langle \uMPS(m,n,d) \rangle \!=\! \Cyc^d(\CC^m)$? \end{Question}
It is known that for $m=d$, equality holds \cite[Proposition 3.11]{Czaplinski2019UniformMP}. On the other hand, it follows from a dimension count (\cite[Theorem 3.14]{Czaplinski2019UniformMP}, see also \cite{navascues2018bond}) that if $d \gg m$, the inclusion $\langle \uMPS(m,n,d) \rangle \subset \Cyc^d(\CC^n)$ is strict. In Section~\ref{sec:CH} we will prove that already for  $d=O\big(m^2\big)$, we have a strict inclusion.
\begin{observation}\label{rmk:reflection}
    In the case $m=n=2$, we have the stronger inclusion
\[
    \uMPS(2,2,d) \subseteq \Dih^d(\CC^n).
\]
    This is a consequence of the identity $\Tr(A_{i_1}\cdots A_{i_{d}})= \Tr(A_{i_{d}}\cdots A_{i_1})$, which holds for any pair of $2\times 2$ matrices $A_0$, $A_1$ and sequence $i_1,\dots,i_{d}$ with $i_j \in \{0,1\}$. See \cite[Theorem 1.1]{greene2014traces}
    \end{observation}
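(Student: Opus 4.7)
The inclusion $\uMPS(2,2,d) \subseteq \Dih^d(\CC^n)$ will follow from the cyclic invariance already noted together with the reflection identity $\Tr(A_{i_1}\cdots A_{i_d}) = \Tr(A_{i_d}\cdots A_{i_1})$, since the reflection $s \in D_{2d}$ acts on a basis tensor $e_{i_1}\otimes\cdots\otimes e_{i_d}$ by sending it to $e_{i_d}\otimes\cdots\otimes e_{i_1}$. So the entire content of the observation is the trace identity for $2\times 2$ matrices on the two-letter alphabet $\{A_0,A_1\}$, which I plan to prove by strong induction on $d$.

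The reduction tool is the Cayley-Hamilton relation $M^2 = \Tr(M)\,M - \det(M)\,I$ for $2\times 2$ matrices. The cases $d \le 2$ are trivial (the reversed word equals the original or a cyclic rotation of it). For the inductive step, consider a binary word $w = A_{i_1}\cdots A_{i_d}$. If no two cyclically adjacent letters of $w$ are equal, then because only two letters are available $w$ must be one of the alternating words $A_0A_1A_0A_1\cdots$ or $A_1A_0A_1A_0\cdots$ (so $d$ is even), and $\mathrm{rev}(w)$ is then a cyclic rotation of $w$; the identity follows from cyclic invariance of trace. Otherwise, after rotating $w$ cyclically (which leaves both $\Tr(w)$ and $\Tr(\mathrm{rev}(w))$ unchanged), we may write $w = u\,A_iA_i\,v$ for some subwords $u,v$, and Cayley-Hamilton yields
\[
\Tr(w) = \Tr(A_i)\,\Tr(u\,A_i\,v) - \det(A_i)\,\Tr(uv).
\]
An analogous rewriting of $\Tr(\mathrm{rev}(w)) = \Tr(\mathrm{rev}(v)\,A_iA_i\,\mathrm{rev}(u))$ gives
\[
\Tr(\mathrm{rev}(w)) = \Tr(A_i)\,\Tr(\mathrm{rev}(u\,A_i\,v)) - \det(A_i)\,\Tr(\mathrm{rev}(uv)).
\]
Since $u A_iv$ and $uv$ both have length strictly less than $d$, the inductive hypothesis matches the two right-hand sides term by term, closing the induction.

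The main bookkeeping obstacle is verifying that the case split is exhaustive and that the cyclic-rotation step truly aligns $w$ with $\mathrm{rev}(w)$: one must check $\mathrm{rev}(uA_iA_iv) = \mathrm{rev}(v)A_iA_i\mathrm{rev}(u)$, and that cyclic rotations of $w$ correspond under reversal to cyclic rotations of $\mathrm{rev}(w)$, which holds because cyclic rotation and reversal together generate the dihedral group $D_{2d}$. Each of these is a short combinatorial verification; once in place, the proof is essentially mechanical. As a backup, one may simply invoke \cite[Theorem~1.1]{greene2014traces}, where the same identity is established directly.
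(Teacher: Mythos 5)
Your argument is correct, but it takes a genuinely different route from the paper: the paper does not prove the trace identity at all, it simply cites \cite[Theorem 1.1]{greene2014traces}, whereas you give a self-contained inductive proof. Your reduction is sound: the degenerate case (no cyclically adjacent equal letters) only occurs for even $d$ and forces $w$ to be the alternating word, whose reversal is a rotation of itself; otherwise a rotation (which changes neither $\Tr(w)$ nor $\Tr(\mathrm{rev}(w))$, since reversal conjugates rotations to rotations in $D_{2d}$) puts $w$ in the form $uA_iA_iv$, and the substitution $A_i^2=\Tr(A_i)A_i-\det(A_i)I$ expresses both $\Tr(w)$ and $\Tr(\mathrm{rev}(w))$ as the same linear combination of traces of the strictly shorter words $uA_iv$ and $uv$ and their reversals, so strong induction closes. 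Note that it is essential that the alphabet has only two letters: for three matrices the identity already fails at $d=3$, since $\Tr(CBA)=\Tr(ACB)\neq\Tr(ABC)$ in general, and indeed your case analysis uses the two-letter hypothesis exactly at the point where you argue that a cyclically non-repeating word must be alternating. What your approach buys is a proof whose mechanism (Cayley--Hamilton used to trade a square of a matrix for lower-degree trace data) is precisely the technique the paper later develops in \Cref{sec:CH}, so it fits the paper's toolkit nicely; what the citation buys is brevity, and Greene's theorem is in any case the standard reference. Either is acceptable here.
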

\begin{Example}
    We consider $\uMPS(2,2,6)$.
    For every $A_0,A_1\in {\mathbb C}^{2\times 2}$, we have the trace relation
    $\Tr\big(A_0^2A_1^2A_0A_1\big)=\Tr\big(A_1A_0A_1^2A_0^2\big)$, that does not come from invariance of trace under cyclic permutations of the matrices.
\end{Example}

\subsection[GL_n-invariance]{$\boldsymbol{{\rm GL}_n}$-invariance}
The general linear group ${\rm GL}_n$ naturally acts on the space $({\mathbb C}^n)^{\otimes d}$: given $g\in {\rm GL}_n$ and $\omega=v_1 \otimes \cdots \otimes v_d\in ({\mathbb C}^n)^{\otimes d}$, we have
 \begin{equation*}%
 g\cdot (v_1\otimes \cdots \otimes v_N)= (g \cdot v_1)\otimes \cdots \otimes (g\cdot v_N).
 \end{equation*}
 Clearly, $\Cyc^d({\mathbb C}^n)$ and $\Dih^d({\mathbb C}^n)$ are invariant under this action. The following computation shows that $\uMPS(m,n,d)$ is invariant under this action as well:
 \begin{align*}
 	g\cdot \phi(A_1,\dots ,A_n)
 	&=\sum_{j_1,\dots, j_d=1}^{n} \Tr(A_{j_1}\cdots A_{j_d})  \bigg(\sum_{i_1=1}^{n}{g_{i_1,j_1}e_{i_1}}\bigg)\otimes \cdots \otimes \bigg(\sum_{i_d=1}^{n}{g_{i_d,j_d}e_{i_d}}\bigg) \\
 	&=\sum_{j_1,\dots, j_d=1}^{n} \sum_{i_1,\dots,i_d=1}^n g_{i_1,j_1} \cdots g_{i_d,j_d}\Tr(A_{j_1}\cdots A_{j_d}) \ e_{i_1}\otimes \cdots \otimes e_{i_d} \nonumber \\
 	&=\sum_{j_1,\dots, j_d=1}^{n} \sum_{i_1,\dots,i_d=1}^n \Tr(g_{i_1,j_1} A_{j_1}\cdots g_{i_d,j_d} A_{j_d}) \ e_{i_1}\otimes \cdots \otimes e_{i_d}\nonumber \\
 	&=\sum_{i_1,\dots,i_d=1}^n\Tr\bigg[\bigg(\sum_{j_1=1}^n g_{i_1,j_1}A_{j_1}\Big)\cdots \Big(\sum_{j_d=1}^n g_{i_d,j_d}A_{j_d}\bigg)\bigg]e_{i_1}\otimes \cdots \otimes e_{i_d}\nonumber \\
 	&= \phi\bigg(\sum_{j=1}^n g_{1,j} A_j, \dots ,\sum_{j=1}^n g_{n,j} A_j\bigg). \nonumber
 \end{align*}

 This means that the space $\langle \uMPS(m,n,d) \rangle$ we are interested in is naturally a representation of~${\rm GL}_n$.
 In the remainder of this subsection, we briefly recall what we will use about representation theory of ${\rm GL}_n$, and fix notation.

We once and for all fix a torus $T \subset {\rm GL}_n$, consisting of all diagonal matrices; and identify~$T$ with $(\CC^*)^n$.
For $\lambda=(\lambda_0,\dots,\lambda_{n-1}) \in \ZZ^n$ and $t=\diag(t_0,\dots,t_{n-1}) \in T$, we write $t^\lambda = t_0^{\lambda_0} \cdots t_{n-1}^{\lambda_{n-1}} \in \CC^*$.
Let $V$ be any representation of ${\rm GL}_n$.
For any $\lambda \in \ZZ^n$, the \emph{weight space} $V_{\lambda}$ is defined as
\[
V_{\lambda} = \big\{v \in V \mid t\cdot v = t^\lambda v \  \forall t \in T\big\}.
\]
It is a well-known fact from representation theory that
\[
V = \bigoplus_{\lambda \in \ZZ^n}{V_{\lambda}}
\]
as vector spaces; and that knowing the dimensions of the weight spaces uniquely determines the representation $V$ up to isomorphism.
The polynomial
\[
\chi_V = \sum_{\lambda \in \ZZ^n}{(\dim V_{\lambda}) t^{\lambda}}
\]
is known as the \emph{character} of $V$. If we view our representation as a morphism $\rho\colon {\rm GL}_n \to {\rm GL}(V)$, the character $\chi_V$ is equal to $\Tr(\rho(\diag(t_0,\dots,t_{n-1})))$.

So we can refine our main Question \ref{question:main} to:
 \begin{Question} \label{question:repPrecise}
 Let $V=\langle \uMPS(m,n,d) \rangle$, viewed as a ${\rm GL}_n$-representation.
 For every weight $\lambda \in \ZZ^n$, determine the dimension of the weight space $V_{\lambda}$.
 \end{Question}

\subsection{Words, necklaces and bracelets}
  As a warm-up, let us consider the classical representation $V=(\CC^n)^{\ot d}$. Its character is equal to $(t_1+\cdots+t_n)^d$, and by expanding we find that $\dim{V_\lambda}$ is equal to the multinomial coefficient $\binom{d}{\lambda_1, \dots,  \lambda_n}$ if $\sum_{\lambda_i} = d$, and zero otherwise.
We can also see this in terms of coordinates, and this will be useful later:
\begin{Definition}
    A \emph{word} of length $d$ on the alphabet $[n]$ is just an ordered tuple $I=(i_1,\dots,i_d)$, with $i_j \in [n]$. The \emph{weight} of a word $I$ is a tuple $w(I) = (w_0,\dots,w_{n-1}) \in \ZZ^n$, where $w_i$ is the number of entries in $I$ that are equal to $i$.
\end{Definition}
For every word $I=(i_1,\dots, i_d)$, we can define a vector $e_I := e_{i_1} \otimes \cdots \otimes e_{i_d}$. The space $(\CC^n)^{\ot d}$ has a basis given by the $e_I$, where $I$ runs over all words of length $d$ on the alphabet $[n]$. In addition, every $e_I$ is a weight vector of weight $w(I)$. So the dimension of the weight space $V_\lambda$ is the number of words of weight $\lambda$, which is indeed the multinomial coefficient $\binom{d}{\lambda_1, \dots,  \lambda_n}$.

 We move on to the subrepresentations $\Cyc^d(\CC^n)$ and $\Dih^d(\CC^n)$, the natural ambient spaces for uniform matrix product states.

 \begin{Definition}
     A \emph{necklace} (of length $d$ on the alphabet $[n]$) is an equivalence class of words, where two words are equivalent if they agree up to the action $C_d$. A \emph{bracelet} (of length $d$ on the alphabet $[n]$) is an equivalence class of words, where two words are equivalent if they agree up to the action $D_{2d}$. For a fixed necklace or bracelet, all words in the equivalence class clearly have the same weight; this is the \emph{weight} of the necklace or bracelet. We denote by $N(n,d)$, resp. $B(n,d)$, the set of necklaces, resp. bracelets, of length $d$ on $[n]$ and by $N_{\lambda}(n,d)\subset N(n,d)$, resp. $B_{\lambda}(n,d)\subset B(n,d)$, the subset of elements of weight $\lambda\in \ZZ^n$.
 \end{Definition}

 To every necklace $N\in N(n,d)$, we associate a basis vector $e_N := \frac{1}{d}\sum_{\sigma \in C_d}{\sigma \cdot e_I}$, where $I$ is any representative of $N$. Then $\Cyc^d(\CC^n)$ has a basis given by $\{e_N\colon N\in N(n,d)\}$.
 Moreover, $e_N$ is a weight vector of weight $w(N)$, hence we find that the dimension of the weight space of weight $\lambda$ is given by the number $|N_{\lambda}(n,d)|$ of necklaces of weight $\lambda$.

 \begin{Remark}
 The number $|N(n,d)|$ of necklaces of length $d$ on $[n]$ can be counted by the following formula, see for instance \cite[Theorem~7.10]{Stanley}:
 \begin{equation}\label{dimCyc}
        \dim \Cyc^d({\mathbb C}^n)= |N(n,d)|= \frac{1}{d}\sum_{l|d}\phi(l)n^{\frac{d}{l}},
 \end{equation}
 where $\varphi$ is Euler's totient function.
 There is also a formula for $|N_{\lambda}(n,d)| =\dim \Cyc^d({\mathbb C}^n)_{\lambda}$: it is equal to the coefficient of    $x_0^{\lambda_0} \cdots x_{n-1}^{\lambda_{n-1}}$ in the polynomial
\begin{equation}\label{dimCyc2}
\frac{1}{d}\sum_{\ell \mid d}{\big(x_0^{d/\ell}+\cdots+x_{n-1}^{d/\ell}\big)^{\ell}}\varphi\left(\frac{d}{\ell}\right).
\end{equation}
 \end{Remark}
  To every bracelet $b\in B(n,d)$, we associate a basis vector $e_b := \frac{1}{2d}\sum_{\sigma \in D_{2d}}{\sigma \cdot e_I}$, where $I$ is a~representative of $b$. Then $\Dih^d(\CC^n)$ has a basis given by $\{e_b\colon b\in B(n,d)\}$,
  and the dimension of the weight space of weight $\lambda$ is given by the number $|B_{\lambda}(n,d)|$ of bracelets of weight $\lambda$.
  \begin{Remark}
  The number of bracelets of length $d$ on $[n]$ is given by
  \begin{equation}\label{dimDih}
        \dim \Dih^d({\mathbb C}^n)= |B(n,d)|= \begin{cases}
               \displaystyle \frac{1}{2}|N(n,d)| + \frac{1}{4}(n+1)n^{d/2} & \text{for } d \text{ even,}\vspace{1mm}\\
              \displaystyle  \frac{1}{2}|N(n,d)| + \frac{1}{4}n^{(d+1)/2} & \text{for } d \text{ odd.}
        \end{cases}
 \end{equation}
  We only state the formula for $|B_{\lambda}(n,d)|$ in the case of binary bracelets (i.e.\ $n=2$), as that is the only case that is relevant to us:
 \begin{equation}\label{dimDih2}
|B_{(w,d-w)}(2,d)|=\begin{cases}
\displaystyle \left(\frac{1}{2d}\sum_{l \vert \gcd(d,w)}{\varphi(l)\binom{\frac{d}{l}}{\frac{w}{l}}}\right) + \frac{1}{2}\binom{\frac{d}{2}-1}{\frac{w-1}{2}} & \text{for $w$ odd,} \\
\displaystyle \left(\frac{1}{2d}\sum_{l \vert \gcd(d,w)}{\varphi(l)\binom{\frac{d}{l}}{\frac{w}{l}}}\right) + \frac{1}{2}\binom{\frac{d}{2}}{\frac{w}{2}} & \text{for $w$ even.}
\end{cases}
\end{equation}
\end{Remark}
Equations \eqref{dimCyc}--\eqref{dimDih2} can be derived from the P\'olya enumeration theorem \cite{polya2012combinatorial} applied to the action of the
 cyclic and the dihedral group, respectively.

  \section{Computations} \label{sec:comp}

 In this section, we describe how to computationally answer Question~\ref{question:repPrecise} for fixed parameters. We focus on the smallest interesting case $m=n=2$.
 In this case we are dealing with representations of ${\rm GL}_2$, so the weights are in~$\ZZ^2$. Moreover, the only occurring weights in $\big(\CC^2\big)^{\ot d}$ are $t_0^{w}t_1^{d-w}$ for $w=0,\dots,d$. For subrepresentations of $\big(\CC^2\big)^{\ot d}$, we will abbreviate the weight spaces $V_{(w,d-w)}$ to~$V_w$. Our goal is to determine the dimension of the weight spaces $\langle \uMPS(2,2,d) \rangle_w$.

 All of our dimension counts in this section and the next use the following easy observation.
 \begin{observation} \label{obs:dimLinSpan}
       For $p_1, \dots, p_N \in \CC[y_1,\dots,y_s]$ polynomials, and $X$ the image of the polynomial map
 \begin{gather} \label{eq:imageOfAPolynomialMap}
 \begin{split}
     & \CC^s\to \CC^N ,\\
    &  (y_1, \dots, y_s) \mapsto (p_1(y_1,\dots, y_s), \dots, p_N(y_1,\dots, y_s)),
      \end{split}
 \end{gather}
 a linear equation $\sum{\alpha_ix_i}$ vanishes on $X$ if and only if the identity $\sum{\alpha_ip_i} = 0$ holds in the polynomial ring $\CC[y_1,\dots,y_s]$. In particular, the dimension of the linear span of $X$ is equal to the dimension of the subspace of $\CC[y_1,\dots,y_s]$ spanned by the $p_i$'s.
 \end{observation}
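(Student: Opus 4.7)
The plan is to establish both directions of the equivalence between linear forms vanishing on $X$ and polynomial relations among the $p_i$, then deduce the dimension statement via rank-nullity. The only non-trivial ingredient is the fact that $\CC$ is an infinite field.

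First I would dispatch the easy direction: if $\sum_i \alpha_i p_i = 0$ as an element of $\CC[y_1, \ldots, y_s]$, then substituting any $y \in \CC^s$ gives $\sum_i \alpha_i p_i(y) = 0$, which says exactly that the linear form $\sum_i \alpha_i x_i$ vanishes at every point $(p_1(y), \ldots, p_N(y)) \in X$.

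For the converse, I would observe that if $\sum_i \alpha_i x_i$ vanishes on $X$, then the polynomial $q(y_1, \ldots, y_s) := \sum_i \alpha_i p_i(y_1, \ldots, y_s)$ is zero on all of $\CC^s$, and hence must be the zero polynomial since $\CC$ is infinite. This is the only place where the hypothesis on the ground field enters, and it is the subtle point worth flagging: the analogous statement would fail over a finite field (for instance, $y^p - y$ vanishes on $\mathbb{F}_p$).

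To obtain the dimension formula, I would introduce the linear map $\psi : \CC^N \to \CC[y_1, \ldots, y_s]$ sending $(\alpha_1, \ldots, \alpha_N)$ to $\sum_i \alpha_i p_i$. The equivalence just established identifies $\ker \psi$ with the annihilator $L = \{\ell \in (\CC^N)^* : \ell|_X \equiv 0\}$, while $\operatorname{im} \psi = \Span(p_1, \ldots, p_N)$. Since $\dim \langle X \rangle = N - \dim L$ by duality, rank-nullity applied to $\psi$ yields $\dim \langle X \rangle = N - \dim \ker \psi = \dim \operatorname{im} \psi = \dim \Span(p_1, \ldots, p_N)$, as claimed. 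There is no real obstacle in the argument beyond the appeal to $\CC$ being infinite, so the proof will be very short.
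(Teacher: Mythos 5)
Your argument is correct and complete: the paper states this as an unproved ``easy observation,'' and the implicit justification is exactly what you give --- substitution for one direction, the fact that a polynomial vanishing identically on $\CC^s$ is zero (since $\CC$ is infinite) for the other, and rank--nullity applied to $(\alpha_1,\ldots,\alpha_N)\mapsto\sum_i\alpha_i p_i$ for the dimension count. Your remark that the infinitude of the field is the one genuinely load-bearing hypothesis is apt.
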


 \subsection{The trace parametrization} If we directly use Definition~\ref{def: umps}, we see that $\uMPS(2,2,d)$ is the closed image of a polynomial map~$\CC^8 \to \Dih^d\big(\CC^2\big)$. However, in this specific case there is an alternative parametrization by~$\CC^5$ instead of~$\CC^8$: the \emph{trace parametrization}, which appears to be computationally more efficient in practice. It is based on the connection between uniform matrix product states and invariant theory of matrices.

 \begin{Definition} \label{def:TraceAlgebra}
    Let
 \[
 R=\CC\big[\big(a_{ij}^k\big)_{1\leq i,j \leq m; 1 \leq k \leq n}\big]
  \]be the polynomial ring in $m^2n$ variables, and for $k=0, \dots, n-1$, let $A_k := \big(a_{ij}^k\big)_{1\leq i,j \leq m}$ be generic $m \times m$ matrices.
    The \emph{trace algebra} $\mathcal{C}_{m,n}$ is the subalgebra of $R$ generated by the polynomials $\Tr(A_{i_1}\cdots A_{i_s})$, where $(i_1,\dots,i_s)$ runs over all words (or equivalently: necklaces) in~$[n]$.
 \end{Definition}
 \begin{Remark}
    The trace algebra is precisely the subring of $R$ consisting of all elements that are invariant under simultaneous conjugation:
\[
    f \in \mathcal{C}_{m,n} \iff f\big(P^{-1}A_0P,\dots,P^{-1}A_{n-1}P\big) = f(A_0,\dots,A_{n-1}) \quad  \forall P \in {\rm GL}_m.
\]
    This is known as the \emph{first fundamental theorem} in the invariant theory of matrices \cite{procesi1976invariant, sibirskii1968algebraic}.
 \end{Remark}
 \begin{Proposition}[{\cite[Corollary 2]{sibirskii1968algebraic}}]
The trace algebra $\mathcal{C}_{2,2}$ is generated by the following five polynomials:
\[
\Tr(A_0), \quad \Tr(A_1),\quad \Tr\big(A_0^2\big),\quad \Tr(A_0A_1), \quad \Tr\big(A_1^2\big),
\]
and moreover, there are no polynomial relations between these generators.
 \end{Proposition}
 \begin{Corollary}
     For every bracelet $b=(b_1,\dots,b_k)$, there is a unique  polynomial
     \[
     P_b(T_0,T_1,T_{00},T_{01},T_{11}) \in \CC[T_0,T_1,T_{00},T_{01},T_{11}]
     \]
     such that for every pair $(A_0,A_1)$ of $2\times2$ matrices, the following equality holds:
     \begin{equation*} 
         \Tr(A_{b_1}\cdots A_{b_k}) = P_b\big(\Tr(A_0),\Tr(A_1), \Tr\big(A_0^2\big), \Tr(A_0A_1), \Tr\big(A_1^2\big)\big).
     \end{equation*}
 \end{Corollary}
 \begin{Remark} \label{rmk:multiHomog}
    If we give the ring $\CC[T_0,T_1,T_{00},T_{01},T_{11}]$ a grading by putting $\deg(T_0)=\deg(T_1)\allowbreak =1$ and $\deg(T_{00})=\deg(T_{01})=\deg(T_{11})=2$, then the polynomial $P_b$ is homogeneous of degree $\operatorname{length}(b)$.
 \end{Remark}

 The above means that $\uMPS(2,2,d)$ is the image of the polynomial map
 \begin{align} \label{eq:traceParMap}
 \begin{split}
     \psi\colon \  \CC^5 &\to \Dih^d\big(\CC^2\big), \\
     (T_0,T_1,T_{00},T_{01},T_{11})& \mapsto \sum_{b}{P_b(T_0,T_1,T_{00},T_{01},T_{11})e_b},
     \end{split}
 \end{align}
 where $b$ runs over all bracelets of length $d$. This is the trace parametrization.

 In order to compute the polynomials $P_b$, for bracelets of length $3$, one verifies that
 \begin{alignat*}{3}
 &   P_{000}= -\frac{1}{2}T_0^3 + \frac{3}{2}T_0T_{00},\qquad& &
    P_{100}= -\frac{1}{2}T_0^2T_1 + \frac{1}{2}T_1T_{00}+ T_0T_{01}, &\\
&    P_{110}= -\frac{1}{2}T_0T_1^2 + \frac{1}{2}T_0T_{11}+ T_1T_{01},\qquad & &
    P_{111}= -\frac{1}{2}T_1^3 + \frac{3}{2}T_1T_{11}.&
 \end{alignat*}

For bracelets of length $\geq 4$, we can inductively use the following identity \cite{sibirskii1968algebraic}, which holds for every quadruple $(A,B,C,D)$ of $2 \times 2$ matrices:
\begin{gather*}
    2\Tr(ABCD) = \Tr(A) (\Tr(BCD) - \Tr(B)\Tr(CD) ) + \Tr(B) (\Tr(CDA) - \Tr(C)\Tr(DA) ) \nonumber\\
  \hphantom{2\Tr(ABCD) =}{}   + \Tr(C) (\Tr(DAB) - \Tr(D)\Tr(AB) ) \nonumber \\
  \hphantom{2\Tr(ABCD) =}{} + \Tr(D) (\Tr(ABC) - \Tr(A)\Tr(BC) ) -\Tr(AC)\Tr(BD)\\ 
\hphantom{2\Tr(ABCD) =}{} + \Tr(AB)\Tr(CD) + \Tr(AD)\Tr(BC)  + \Tr(A)\Tr(B)\Tr(C)\Tr(D).  \nonumber
\end{gather*}

\subsection{Computing the character}\label{subsec: char}
 The weight space
 $\langle \uMPS(2,2,d) \rangle_w$
 is the linear span of the image of the map
\begin{align*}
     \CC^5 &\to \Dih^d\big(\CC^2\big)_w ,\\
     (T_0,T_1,T_{00},T_{01},T_{11})& \mapsto \sum_{b}{P_b(T_0,T_1,T_{00},T_{01},T_{11})e_b},
 \end{align*}
 where $b$ ranges over all bracelets of weight $w$. By Observation~\ref{obs:dimLinSpan}, we need to compute the dimension of the linear subspace of $\CC[T_0,T_1,T_{00},T_{01},T_{11}]$ spanned by the $P_b$'s. This can be computed by putting the coefficients of the $P_b$'s in a matrix and computing its rank.

Putting everything together, we obtain Algorithm~\ref{alg:Linear span }, which for a given $d$ computes the character (in particular the dimension) of $\langle \uMPS(2,2,d) \rangle$.
We implemented this algorithm in SageMath~\cite{code}.

\RestyleAlgo{ruled}

\begin{algorithm}[th!]
\caption{Linear span of $\langle \uMPS(2,2,d) \rangle$}\label{alg:Linear span }
\KwData{$d$}
\KwResult{Character of the representation $\langle \uMPS(2,2,d) \rangle$}
$T_0 \gets \Tr(A_0)$; $T_1 \gets \Tr(A_1)$; $T_{00} \gets \Tr\big(A_0^2\big)$; $T_{01} \gets \Tr(A_0A_1)$; $T_{11} \gets \Tr\big(A_1^2\big)$\;
\For{$\ell=3$ \KwTo d}{
$bracelets$ = bracelets of length $\ell$\;
\For{$b$ in $bracelets$}{
P[b] $\gets$ $\Tr(A_{b_1}\cdots A_{b_\ell})$, expressed in the $T_i$\;
}
}
\For{$w=0$ \KwTo $d$}{
List the monomials $\bfy^{\alpha_1},\dots,\bfy^{\alpha_t}$ appearing in the $P[b]$, where $b$ ranges over all bracelets of weight $w$\;
Write $P[b]=\sum_j \beta_{b,j} \bfy^{\alpha_j}$\;
Compute the rank of the matrix $(\beta_{b,j})_{b,j}$\;
}
\end{algorithm}

The results are summarized in Table~\ref{table:character}, where we write
$D_w := \dim \langle \uMPS(2,2,d) \rangle_{w}$.
For $d < 8$, the space $\langle \uMPS(2,2,d) \rangle$ is equal to the ambient space $\Dih^d\big(\CC^2\big)$.

 \begin{table}[th]\centering
	\caption{Character of $\langle \uMPS(2,2,d) \rangle$. Since $D_{w}=D_{d-w}$ (more generally: for every ${\rm GL}_2$-re\-pre\-sen\-tation~$V$ we have that $V_{(b_0,b_1)}=V_{(b_1,b_0)}$), we only list $D_w$ for $w \leq \big\lceil \frac{d}{2}\big\rceil$. The dimension of $\langle \uMPS(2,2,d) \rangle$ equals twice the sum of the $D_w$'s if $d$ is odd, and twice the sum of the $D_w$'s minus the last $D_w$ if $d$ is even.}\label{table:character}

\vspace{-5mm}

$$
	\begin{array}{@{\,}l|l@{\,\,}l@{\,\,}l@{\,\,}l@{\,\,}l@{\,\,}l@{\,\,}l@{\,\,}l@{\,\,}l@{\,\,}l@{\,\,}l|l|l@{\,}}
	\toprule
	d & D_0	& D_1 	& D_2 	& D_3 	& D_4 & D_5 & D_6 & D_7 & D_8 & D_9 & D_{10} &\dim \langle\uMPS(2,2,d)\rangle & \dim \Dih^d\big(\CC^2\big)\\
	\midrule
	8 & 1 	& 1 	& 4 	& 5 	& 7  	&  &   &  &  &&& 29 & 30\\
	9 & 1	& 1 	& 4 	& 6 	& 8  	&  &  & & &&& 40 & 46\\
	10 & 1 	& 1 	& 5 	& 7 	& 11 	& 11 &  & &   &  && 61 & 78\\
	11 &  1 & 1 	& 5 	& 8 	& 12 	& 14 &  & &   &  && 82 & 126\\
	12 &  1	& 1 	& 6 	& 9 	& 15 	& 17 & 20 &  &  &  && 118 & 224\\
	13 &  1	& 1 	& 6 	& 10 	& 16 	& 20 & 23 &  &  &  && 154 & 380\\
	14 & 1 	& 1 	& 7		& 11	& 19 	& 23 & 29 & 29&  &  && 211 & 687\\
	15 & 1 	& 1 	& 7		& 12 	& 20 	& 26 & 32 & 35&  &  && 268 & 1224\\
	16 & 1 	& 1 	& 8		& 13 	& 23 	& 29 & 38 &41 & 45 &  && 353 & 2250\\
	17 & 1  & 1     & 8     & 14    & 24    & 32 & 41 & 47 & 51  &  && 438 & 4112\\
	18 & 1  & 1     & 9     & 15    & 27    & 35 & 47 & 53 & 61 & 61 && 559 & 7685 \\
	19 & 1  & 1     & 9     & 16    & 28    & 38 & 50 & 59 & 67 & 71 && 680 & 14310 \\
    20 & 1  & 1     & 10    & 17    & 31    & 41 & 56 & 65 & 77 & 81 & 86 & 846 & 27012\\
	\bottomrule
	\end{array}$$
\vspace{-2mm}
\end{table}

 Based on these computations, we make the following conjecture:

\begin{Conjecture}\label{conj: dim}
	\begin{gather*}
	\dim\langle \uMPS(2,2,d) \rangle_{w} \\
\qquad{} = \begin{cases}
	\displaystyle    1 + \frac{d(v-1)v}{2} - \frac{2(v-1)v(2v-1)}{3} + v\left\lfloor \frac{d}{2}\right\rfloor - 2v^2 + v & \text{ for $w = 2v$,} \vspace{1mm}\\
	 \displaystyle   1 + \frac{dv(v+1)}{2} - \frac{2v(v+1)(2v+1)}{3}             & \text{for $w = 2v+1$.}
	\end{cases}
	\end{gather*}
\end{Conjecture}

This would in particular imply:
\begin{Conjecture}[Corollary of Conjecture \ref{conj: dim}] \label{conj:dimcor}
\[
	\dim \langle \uMPS(2,2,d)\rangle = \begin{cases}
	\displaystyle\frac{1}{192}\big(d^4-4d^2+192d+192\big) & \text{ for $d$ even,}\vspace{1mm}\\
	\displaystyle\frac{1}{192}\big(d^4-10d^2+192d+201\big) & \text{ for $d$ odd.}
	\end{cases}
\]
\end{Conjecture}

\subsection{Higher degree equations}

Equations of tensor varieties such as secant varieties of the Segre variety or Veronese variety are well known to be hard to compute. For uniform matrix product states, Critch and Morton gave a~complete description of the ideal of $\uMPS(2,2,4)$ and $\uMPS(2,2,5)$ and several linear equation of $\uMPS(2,2,d)$ are given for $d$ until $12$, cf.~\cite{Critch2014AlgebraicGO}. The generators of the ideal of $\uMPS(2,2,d)$ for $d=4,5,6$ are given in~\cite{Czaplinski2019UniformMP}.

Our algorithm from the previous section computes the linear span of $\uMPS(2,2,d)$, which is equivalent to computing the degree $1$ part of its defining ideal. With some small modifications, one can obtain an algorithm to compute the degree $k$ part of the defining ideal, viewed as a~${\rm GL}_2$-representation. In this paper we will only give a brief sketch of this algorithm. For a~more detailed account, including the use of raising operators to speed up the algorithm, we refer the reader to \cite{ClaudiasThesis}.

We first consider a slightly more general setting:
let $X$ be any variety that is given as the closed image of a homogeneous polynomial map $p\colon \CC^s \to \CC^N$ as in (\ref{eq:imageOfAPolynomialMap}), and let $I$ be its (homogeneous) defining ideal. Suppose that $p$ is compatible with a given ${\rm GL}_n$-action on~$\CC^s$ and~$\CC^N$. Then for every $k \in {\mathbb N}$, the degree $k$ part $I_k$ is naturally a ${\rm GL}_n$-representation. Fix a~weight $w$ and consider the map
\[
p^k_w\colon \  \CC^s \xrightarrow{p} \CC^N \xrightarrow{\nu_k} S^k\CC^N \xrightarrow{\pi_w} \big(S^k\CC^N\big)_w,
\]
where $\nu_k$ is the $k$'th Veronese embedding and $\pi_w$ is the projection onto the weight space. The latter map commutes with the action of the torus.
Then, the weight space $I_{k,w} \subseteq \big(S^k\CC^N\big)_w^*$ is equal to
\[ \big\langle \Im(p^k_w) \big\rangle^\perp \cong \big({S^k \CC^N} / {\big\langle \Im\big( p^k_w\big) \big\rangle} \big)^*,\]
see for example  \cite[Proposition~4.4.1.1]{landsberg2012tensors}.
The character of this representation is given by the difference of the characters of $\big\langle \Im\big(p^k_w\big) \big\rangle$ and $S^k \CC^N$. The latter character can be computed from the character of $\CC^N$, and hence we are left to compute the character of $\big\langle \Im\big(p^k_w\big) \big\rangle$.

In the case $X=\uMPS(2,2,d)$, we can compute the character of $\big\langle \Im\big(p^k_w\big) \big\rangle$ using a minor modification of Algorithm~\ref{alg:Linear span }, where in the second loop we replace the collection of polynomials
$\{P[b] \mid \operatorname{weight}(b) = w \}$
with their $k$-fold products
\[ \bigg\{\prod_{j=1}^{k} P[b_j] \mid \sum_j \operatorname{weight}(b_j) = w \bigg\}.\]

Using this algorithm, we computed $I(\uMPS(2,2,d))_2$ for $d\leq 10$ and $I(\uMPS(2,2,d))_3$ for $d\leq 9$. Our code is available at \cite{code}, and our results are summarized in Tables~\ref{table:character2} and~\ref{table:character3}.

 \begin{table}[th!]\centering
	\caption{Character of $I(\uMPS(2,2,d))^*_2$. Since $D_{w}=D_{2d-w}$ (again: for every ${\rm GL}_2$-representation $V$ we have that $V_{(b_0,b_1)}=V_{(b_1,b_0)}$),  we only list $D_w$ for $w \leq d$.}  \label{table:character2}

\vspace{-5mm}

$$
	\begin{array}{l|llllllll}
	\toprule
	d & D_3 	& D_4 & D_5 & D_6 & D_7 & D_8 & D_9 & D_{10}\\
	\midrule
	6 & 0   & 1  & 1  & 2   &     &     &     &     \\
	7 & 0   & 1  & 3  & 6   & 7   &     &     &     \\
	8 & 0   & 5  & 10 & 25  & 32  & 42  &     &     \\
	9 &  1  & 7  & 21 &  48 & 79  & 110 & 119 &     \\
	10 &  1 & 14 & 38 & 100 & 176 & 290 & 360 & 408 \\
	\bottomrule
	\end{array}$$
\vspace{-2mm}
\end{table}

 \begin{table}[th!]\centering
	\caption{Character of $I(\uMPS(2,2,d))^*_3$. Since $D_{w}=D_{3d-w}$, we only list $D_w$ for $w \leq \big\lceil \frac{3d}{2}\big\rceil$.}  \label{table:character3}

\vspace{-5mm}

$$
	\begin{array}{l|lllllllllll}
	\toprule
	d & D_3 & D_4 & D_5 & D_6 & D_7 & D_8 & D_9 & D_{10} & D_{11} & D_{12} & D_{13}\\
	\midrule
	6 & 0 & 1 &  2 &  8 &  11 &  17 &  17 &      &      &      &      \\
	7 & 0 & 1 &  4 & 15 &  29 &  49 &  67 &   77 &      &      &      \\
	8 & 0 & 5 & 14 & 51 & 101 & 198 & 292 &  414 & 478  & 532  &      \\
	9 & 1 & 7 & 26 & 83 & 191 & 388 & 671 & 1039 & 1431 & 1784 & 1983 \\
	\bottomrule
	\end{array}$$
\vspace{-2mm}
\end{table}

\section{Results} \label{sec:results}

 \subsection{Linear relations via Cayley--Hamilton} \label{sec:CH}
We recall the classical Cayley--Hamilton theorem.
We use this result in order to find linear equations for ${\uMPS(m,n,d+k)}$, $k\geq m$ based on linear equations for ${\uMPS(m,n,N)}$, $N=d,d+1,\dots,d+m-1$.

\begin{Theorem}[Cayley--Hamilton]\label{theorem: CH}
Let $A\in {\mathbb C}^{m\times m}$ be a $m\times m$ complex matrix and $p_A(\lambda)=\det(\lambda \,{\rm Id}_m -A)$ its characteristic polynomial. Then $p_A(A)=0$ where $p_A(\cdot)$ is the corresponding matrix-valued polynomial.
\end{Theorem}
In fact, the only thing we will use is the following statement, which (since $\deg p_A = m$) immediately follows from Theorem~\ref{theorem: CH}.
\begin{Corollary}\label{cor: CH}
	Let $A\in {\mathbb C}^{m\times m}$. Then $A^k$, for $k\geq m$ can be written as a linear combination of its previous powers.
\end{Corollary}
\begin{Lemma}\label{lemma: CH}
    Let $c = (c_1,\dots,c_s) \in \CC^s$ be a vector of coefficients and $\big\{i_\ell^{j}\big\}_{1\leq\ell\leq d, 1 \leq j \leq s}$ be indices; with $i_\ell^{j} \in [n]$.
    Assume that for every $n$-tuple $(A_0,\dots,A_{n-1})$ of $m \times m$ matrices and every $k < m$ the following identity holds:
    \begin{equation} \label{eq:lemCH}
        \sum_{j=1}^s c_j \Tr\big(A_{i_1^j} \cdots A_{i_d^j}A_0^k\big)=0.
    \end{equation}
	Then the same identity holds for arbitrary $k \in {\mathbb N}$.
\end{Lemma}
\begin{proof}
	We use induction on $k \geq m$. By Corollary \ref{cor: CH}, $A_0^k$ can be written as a linear combination of its previous powers $A_0^l$, for $l=0,\dots,m-1$. Therefore, we have
	\begin{align*}
	    \sum_{j=1}^{s}c_j \Tr\big(A_{i_1^j}\cdots A_{i_d^j} A_{0}^k\big)
	    & = \sum_{j=1}^{s}c_j \Tr\bigg[A_{i_1^j}\cdots A_{i_d^j} \bigg(\sum_{l=0}^{m-1}\gamma_l A_0^l\bigg)\bigg]\\
	    & = \sum_{j=1}^s c_j\sum_{l=0}^{m-1} \gamma_l \big(\Tr\big(A_{i_1^j}\cdots A_{i_d^j} A_{0}^l\big)\big)\\
	    & = \sum_{l=0}^{m-1} \gamma_l \bigg(\sum_{j=1}^s c_j \big(\Tr\big(A_{i_1^j}\cdots A_{i_d^j} A_{0}^l\big)\big)\bigg)=0.\tag*{\qed}
	\end{align*}\renewcommand{\qed}{}
\end{proof}

The usefulness of Lemma~\ref{lemma: CH} stems from the fact that one can find expressions of the form~(\ref{eq:lemCH}) which are trivial for small $k$, in the sense that they follow from the cyclic invariance of the trace, but nontrivial for large $k$. We illustrate this in the example below.
\begin{Example}
We show that for any $2 \times 2$ matrices $A_0$, $A_1$, $A_2$, $A_3$ and any $k \geq 0$, the following identity holds
    \begin{gather*}
		 \Tr\big(A_1A_2A_0A_3A_0^k\big)+\Tr\big(A_2A_3A_0A_1A_0^k\big)+\Tr\big(A_3A_1A_0A_2A_0^k\big)\\
		\qquad{}= \Tr\big(A_1A_0A_2A_3A_0^k\big)+\Tr\big(A_2A_0A_3A_1A_0^k\big)+\Tr\big(A_3A_0A_1A_2A_0^k\big).
		\end{gather*}
	By the above argument, it suffices to show the identity for $k=0$ and $k=1$. But these both follow from cyclic invariance of the trace:
	\begin{gather*}
	 \Cline[blue]{\Tr(A_1A_2A_0A_3)}+\Cline[red]{\Tr(A_2A_3A_0A_1)}+\Cline[cyan]{\Tr(A_3A_1A_0A_2)}\\
\qquad{}	= \Cline[cyan]{\Tr(A_1A_0A_2A_3)}+\Cline[blue]{\Tr(A_2A_0A_3A_1)}+\Cline[red]{\Tr(A_3A_0A_1A_2)},\\
	\Cline[blue]{\Tr(A_1A_2A_0A_3A_0)}+\Cline[red]{\Tr(A_2A_3A_0A_1A_0)}+\Cline[cyan]{\Tr(A_3A_1A_0A_2A_0)}\\
	\qquad{} =\Cline[red]{\Tr(A_1A_0A_2A_3A_0)}+\Cline[cyan]{\Tr(A_2A_0A_3A_1A_0)}+\Cline[blue]{\Tr(A_3A_0A_1A_2A_0)}.
	\end{gather*}
	This immediately gives us a nontrivial linear relation on $\uMPS(2,4,d)$, for $d \geq 6$.
	But we can also find linear relations on $\uMPS(2,2,d)$. For instance, if we put $k=2$, $A_2=A_1^2$ and $A_3=A_0A_1$, we find
	\begin{gather*}
		 \Tr\big(A_1^3A_0^2A_1A_0^2\big)+\Tr\big(A_1^2A_0A_1A_0A_1A_0^2\big)+\Tr\big(A_1^2A_0^3A_1^2A_0\big)\\
		\qquad {}= \Tr\big(A_1^2A_0A_1A_0^2A_1A_0\big)+\Tr\big(A_1^2A_0^2A_1^2A_0^2\big)+\Tr\big(A_1^3A_0^3A_1A_0\big),
		\end{gather*}
	which is the unique linear relation on $\uMPS(2,2,8)$ that doesn't follow from dihedral symmetry.
\end{Example}

\begin{Theorem} \label{thm:linRel}
	Let $A_0,\dots,A_m,B$ be $m \times m$ matrices. Then for every $\ell \in {\mathbb N}$ it holds that
	\begin{gather} \label{eq:linRel}
	\sum_{\sigma \in \mathfrak{S}_m, \tau \in C_{m+1}}\!\!\!\!{\sgn(\sigma)\sgn(\tau)\Tr\big(A_{\tau(0)}B^{\sigma(0)}A_{\tau(1)}B^{\sigma(1)} \cdots A_{\tau(m-1)}B^{\sigma(m-1)}A_{\tau(m)}B^{\ell}\big)}=0.\!\!\!
	\end{gather}
	Here $\mathfrak{S}_m$ is the symmetric group acting on $\{0,1,\dots,m-1\}$, and $C_{m+1}$ is the cyclic group acting on $\{0,1,\dots,m\}$.
\end{Theorem}
\begin{proof}
	We will first show the statement for $\ell \in \{0,1,\dots,m-1\}$. So let us fix such an $\ell$. We will write
	\[T(\sigma,\tau) := \Tr\big(A_{\tau(0)}B^{\sigma(0)}A_{\tau(1)}B^{\sigma(1)} \cdots A_{\tau(m-1)}B^{\sigma(m-1)}A_{\tau(m)}B^{\ell}\big).\]
	Let us write $c_{a}$ for the permutation that cyclically permutes the first $a$ elements. Precisely
	\[c_{a}(i) =
	\begin{cases}
		i+1 & \text{for } i<a-1, \\
		0 & \text{for } i=a-1, \\
		i & \text{for } i>a-1.
	\end{cases}\]

{\bf Step 1.}
	For $\sigma \in \mathfrak{S}_m$ and $\tau \in C_{m+1}$, we define{\samepage
	\begin{gather*}
	\tilde{\sigma} := \sigma \circ c_{\sigma^{-1}(\ell)+1}^{-1} \circ c_{m}^{\sigma^{-1}(\ell)+1}, \qquad
	\tilde{\tau} := \tau \circ c_{m+1}^{\sigma^{-1}(\ell)+1}.
	\end{gather*}
	We have $T(\sigma,\tau) = T(\tilde{\sigma}, \tilde{\tau})$.}
	
	Indeed, if we write $k=\sigma^{-1}(\ell)$, then
	\begin{align*}
	T(\sigma,\tau) &= \Tr\big(A_{\tau(0)}B^{\sigma(0)}\cdots A_{\tau(k)}B^{\sigma(k)}A_{\tau(k+1)}B^{\sigma(k+1)} \cdots A_{\tau(m-1)}B^{\sigma(m-1)}A_{\tau(m)}B^{\sigma(k)}\big)\\
	&=\Tr\big(A_{\tau(k+1)}B^{\sigma(k+1)} \cdots A_{\tau(m)}B^{\sigma(k)}A_{\tau(0)}B^{\sigma(0)} \cdots A_{\tau(k-1)}B^{\sigma(k-1)}A_{\tau(k)}B^{\sigma(k)}\big)\\
	&=T(\tilde{\sigma}, \tilde{\tau}).
	\end{align*}
	Where for the last step, note that
	\begin{itemize}\itemsep=0pt
		\item $k+1 = c_{m+1}^{k+1}(0)$, \dots, $m=c_{m+1}^{k+1}(m-k-1)$, $0=c_{m+1}^{k+1}(m-k)$, \dots, $k=c_{m+1}^{k+1}(m)$,
		\item $k+1=c_{k+1}^{-1}\big(c_m^{k+1}(0)\big)$, \dots, $m-1=c_{k+1}^{-1}\big(c_m^{k+1}(m-k-2)\big)$, $k=c_{k+1}^{-1}\big(c_m^{k+1}(m-k-1)\big)$, $0=c_{k+1}^{-1}\big(c_m^{k+1}(m-k)\big)$, \dots, $k-1=c_{k+1}^{-1}\big(c_m^{k+1}(m-1)\big)$.
	\end{itemize}

    {\bf Step 2.}
	Note that the assignment
	\begin{align*}
	\mathfrak{S}_m \times C_{m+1} & \to \mathfrak{S}_m \times C_{m+1}, \\
	(\sigma,\tau) & \mapsto (\tilde{\sigma}, \tilde{\tau})
	\end{align*}
	is an involution. Indeed,
	we have
	\begin{gather*}
		\tilde{\sigma}^{-1}(\ell) = c_m^{-\sigma^{-1}(\ell)-1}\big(c_{\sigma^{-1}(\ell)+1}\big(\sigma^{-1}(\ell)\big)\big)
		= c_m^{-\sigma^{-1}(\ell)-1}(0)
		= m-\sigma^{-1}(\ell)-1.
	\end{gather*}
	So, again writing $k=\sigma^{-1}(\ell)$,
	\begin{gather*}
	\tilde{\tilde{\sigma}} = \tilde{\sigma} \circ c_{\tilde{\sigma}^{-1}(\ell)+1}^{-1} \circ c_{m}^{\tilde{\sigma}^{-1}(\ell)+1}
	=\sigma \circ c_{k+1}^{-1} \circ c_{m}^{k+1} \circ c_{m-k}^{-1} \circ c_{m}^{m-k}
	= \sigma.
	\end{gather*}
	To see the last equality:
	\begin{itemize}\itemsep=0pt
		\item for $i<k$: $c_{k+1}^{-1}\big(c_{m}^{k+1}\big(c_{m-k}^{-1}\big(c_{m}^{m-k}(i)\big)\big)\big) = c_{k+1}^{-1}\big(c_{m}^{k+1}\big(c_{m-k}^{-1}(m-k+i)\big)\big) =  c_{k+1}^{-1}\big(c_{m}^{k+1}(m-k+i)\big)=c_{k+1}^{-1}(i+1)=i$,
		\item for $i=k$: $c_{k+1}^{-1}\big(c_{m}^{k+1}\big(c_{m-k}^{-1}(c_{m}^{m-k}(k))\big)\big) = c_{k+1}^{-1}\big(c_{m}^{k+1}\big(c_{m-k}^{-1}(0)\big)\big) = c_{k+1}^{-1}\big(c_{m}^{k+1}(m-k-1)\big)=c_{k+1}^{-1}(0)=k$,
		\item for $i>k$: $c_{k+1}^{-1}\big(c_{m}^{k+1}\big(c_{m-k}^{-1}(c_{m}^{m-k}(i))\big)\big) = c_{k+1}^{-1}\big(c_{m}^{k+1}\big(c_{m-k}^{-1}(i-k)\big)\big) =  c_{k+1}^{-1}\big(c_{m}^{k+1}(i-k-1)\big)=c_{k+1}^{-1}(i)=i$.
	\end{itemize}
	And furthermore $\tilde{\tilde{\tau}} = \tau \circ c_{m+1}^{k+1} \circ c_{m+1}^{m-k} = \tau$.
	
	{\bf Step 3.} Note that
	\begin{gather*}
	\sgn(\tilde{\sigma})\sgn(\tilde{\tau})  = (-1)^{k+(k+1)(m-1)+(k+1)m}\sgn(\sigma)\sgn(\tau)
	 =-\sgn(\sigma)\sgn(\tau).
	\end{gather*}
	
	From Step 1 we have that $T(\sigma,\tau) = T(\tilde{\sigma},\tilde{\tau})$. There will be cancellations of terms in~(\ref{eq:linRel}) as  $\sgn(\tilde{\sigma})\sgn(\tilde{\tau}) = -\sgn(\sigma)\sgn(\tau)$ from Step~3. Finally using Step~2 we ensure that all terms will cancel out therefore establishing the given identity for $ 0 \leq l \leq m-1$. Now using Lemma~\ref{lemma: CH}, we conclude that the given identity~(\ref{eq:linRel}) holds for all $l \in \mathbb{N}$.
\end{proof}

\begin{Corollary}\label{cor: lin span}
	If $n \geq 3$ and $d \geq \frac{(m+1)(m+2)}{2}$, then ${\uMPS(m,n,d)}$ is contained in a proper linear subspace of the space of cyclically invariant tensors.
\end{Corollary}
\begin{proof}
Let $\ell \geq m$ and let $\SG_{m+1}$ denote the symmetric group acting on $\{0,1,\dots,m-1,\ell\}$. As the trace is invariant under cyclic permutations, for every $\tau \in C_{m+1}$, \[\Tr\big(A_{\tau(0)}B^{\sigma(0)}A_{\tau(1)}B^{\sigma(1)} \cdots A_{\tau(m-1)}B^{\sigma(m-1)}A_{\tau(m)}B^{\ell}\big)\] can be written as \[\Tr\big(A_{0}B^{\sigma'(0)}A_{1}B^{\sigma'(1)} \cdots A_{m-1}B^{\sigma'(m-1)}A_{m}B^{\sigma'(\ell)}\big)\] fixing all positions of $A_i$'s and permuting $B^j$'s according to some $\sigma' \in \mathfrak{S}_{m+1}$. If for a moment we identify $\{0,1,\dots,m-1,\ell\}$ with $[m]$, we have $\sigma'=\sigma \circ \tau^{-1}$. Therefore, we can rewrite (\ref{eq:linRel}) as follows:
\begin{equation} \label{eq:linRelAlt}
	\sum_{\sigma \in \mathfrak{S}_{m+1}}{\sgn(\sigma)\Tr\big(A_{0}B^{\sigma(0)}A_{1}B^{\sigma(1)} \cdots A_{m-1}B^{\sigma(m-1)}A_{m}B^{\sigma(\ell)}\big)}=0.
\end{equation}
Let $X_0$, $X_1$, $X_2$ be $m \times m$ matrices, and in (\ref{eq:linRelAlt}) substitute $A_0=X_0$, $B=X_1$, and $A_i = X_2$ for $i=1,\dots,m$. Note that even after that substitution, the ternary bracelets corresponding to the $(m+1)!$ terms in~(\ref{eq:linRelAlt}) are all distinct. Hence no two terms will cancel, and we get a~nontrivial linear relation on $\uMPS(m,3,d)$, where $d=1+2+\cdots+(m-1)+ \ell + (m+1) \geq 1+2+\cdots+ (m+1) = \binom{m+2}{2}$.
\end{proof}

\begin{Remark}
With a bit more care, one can also get nontrivial relations on $\uMPS(m,2,d)$ in this way. For instance if we take $\ell=m$ and in~(\ref{eq:linRelAlt}) we substitute $A_0=X_0X_1^{m+1}X_0$, $B=X_1$, and $A_i = X_0$ for $i=1,\dots,m$, one verifies that again no terms cancel, and hence we found a~nontrivial linear relation on~$\uMPS(m,2,d)$, where $d=\binom{m+3}{2}$.
\end{Remark}

\subsection[Linear equations for uMPS(2,2,d)]{Linear equations for $\boldsymbol{\uMPS(2,2,d)}$} \label{sec:lineq22d}

From the trace parametrization, we can give an upper bound on $\dim \langle \uMPS(2,2,d) \rangle$.

\begin{Theorem}
For every $d \in {\mathbb N}$, we have the inequality
\[   \dim \langle \uMPS(2,2,d) \rangle \leq
\begin{cases}
      \displaystyle  \frac{1}{192}(d + 6)(d + 4)^2(d + 2) & \text{for $d$ even}, \vspace{1mm}\\
     \displaystyle   \frac{1}{192}(d + 7)(d + 5)(d + 3)(d + 1) & \text{for $d$ odd}.
    \end{cases}
\]
\end{Theorem}
\begin{proof}
    It follows from (\ref{eq:traceParMap}) and Remark~\ref{rmk:multiHomog} that $\dim \langle \uMPS(2,2,d) \rangle$ can be at most the number of degree $d$ monomials in $\CC[T_0,T_1,T_{00},T_{01},T_{11}]$. Counting these monomials gives the above formula.
\end{proof}

Note that asymptotically for $d \to \infty$, the above bound agrees with our conjectured formula in Conjecture \ref{conj:dimcor}.

As in the previous section, we abbreviate ``weight $\lambda=(w,d-w)\in \ZZ^2$'' to ``weight $w$''. In the rest of this section, we provide a proof of Conjecture \ref{conj: dim} in the cases of weight $w=0,1,2,3$.

Consider the parametrization of $\uMPS(2,2,d)$ in coordinates
\begin{align*}
\phi\colon \ \big({\mathbb C}^{2\times 2}\big)^2 &\to \Dih^d\big({\mathbb C}^2\big),\\
(A_0,A_1) & \mapsto \big(\Tr\big(A_0^d\big),\Tr\big(A_0^{d-1}A_1\big),\dots,\Tr\big(A_1^d\big)\big).
\end{align*}
It is in particular a polynomial map in the unknown entries of the matrices $A_0,A_1\in {\mathbb C}^{2\times 2}$, that we denote by
\[
A_0=
\begin{pmatrix}
a_1 & a_2 \\
a_3 & a_4
\end{pmatrix}, \qquad
A_1=
\begin{pmatrix}
b_1 & b_2 \\
b_3 & b_4
\end{pmatrix}.
\]

We will write
\[
T_{i_1 \dots i_d}:=\Tr(A_{i_1}\dots A_{i_d}) \in {\mathbb C}[a_1,\dots,b_4]_d \qquad \text{and} \qquad  W_w := \langle T_b\colon b\in B_w(2,d)\rangle.
\]
By Observation~\ref{obs:dimLinSpan}, we have
\[
\dim \langle \uMPS(2,2,d)\rangle_w= \dim W_w.
\]
The proof of the cases $w=0,1,2,3$ consists in determining that a basis of the vector space~$W_w$ is given by a number of elements that coincides with our conjectured dimension. In order to do this, we guess a set of generators $\{T_b\}$ of the cardinality prescribed by the conjectured dimension. The guess of the $T_b$'s is based on determining a specific \textit{subset} of the set of binary bracelets of weight~$w$, of the proper cardinality. Then, we prove that the generators are linearly independent for a fixed choice of matrices~$A_0$ and~$A_1$.%

The cases $w=0$ and $w=1$ are easy:
\begin{Proposition}
	The space $W_0$ is a $1$-dimensional vector space generated by the polynomial $T_{0,\dots,0}=\Tr\big(A_{0}^d\big)$.
	The space $W_1$ is a $1$-dimensional vector space generated by the polynomial $T_{10\dots 0}=\Tr\big(A_1A_{0}^{d-1}\big)$.
\end{Proposition}
\begin{proof}
	If $w=0$ then $b=({0\dots 0})\in B_0(2,d)$ is the only binary bracelet of weight zero, and
	if $w=1$ then $b=({10\dots 0})\in B_1(2,d)$ is the only binary bracelet of weight $1$.
\end{proof}
We now turn to the case $w=2$. Then Conjecture~\ref{conj: dim} states that $\dim W_w=\big\lfloor\frac{d}{2}\big\rfloor$. But $\big\lfloor \frac{d}{2}\big\rfloor$ is exactly the number $B_2(2,d)$ of generators $T_b$ of $W_w$; hence we need to show that they are linearly independent.
\begin{Proposition}
	The polynomials $\{T_b \colon b\in B_2(2,d)
	\}$ are linearly independent.
\end{Proposition}
\begin{proof}
	Note that
\[
	W_2= \left\langle T_{10^i10^{d-2-i}} \mid i=0,\dots, \left\lfloor \frac{d}{2}\right\rfloor-1 \right\rangle.
\]
	If we make the substitutions
	\[
	A_1=
	\begin{pmatrix}
	0 & 1 \\
	1 & 0
	\end{pmatrix}, \quad
	A_0=
	\begin{pmatrix}
	1 & 0 \\
	0 & x
	\end{pmatrix},
	\]
	our generators $T_b$ become
	\begin{equation}\label{ind polys}
			T_{10^i10^{d-2-i}}=\Tr\big(A_1A_0^iA_1A_0^{d-2-i}\big)= x^i + x^{d-2-i}, \qquad i=0,\dots,\left\lfloor \frac{d}{2}\right\rfloor-1.
	\end{equation}
	Since for the given choice of $A_0$, $A_1$ the polynomials \eqref{ind polys} are $\big\lfloor \frac{d}{2}\big\rfloor$ linearly independent polynomials, the same holds for a generic choice of matrices.
\end{proof}

Finally, we prove the case $w=3$. In this case our conjectured formula states that $\dim W_w = d-3$. Consider the following subset of $B_3(2,d)$:
\begin{equation*}
		\tilde{B}_3:=\{b\in B_3(2,d)\colon b \text{ contains } 11 \text{ {or} } 101\}\subset B_3(2,d).
\end{equation*}
\begin{Lemma}
The cardinality of $\tilde{B}_3$ equals $d-3$.
\end{Lemma}
\begin{proof}
 The cardinality of $\tilde{B}_3$ is the sum of the number of binary bracelets of weight $3$ containing~$11$ and the number of binary bracelets of weight $3$ containing $101$ but not $11$, that are $\big\lceil \frac{d-2}{2}\big\rceil$ and $\big\lceil \frac{d-5}{2}\big\rceil$ respectively.
Therefore the cardinality of $\tilde{B}_3$ is
\[
\left\lceil \frac{d-2}{2}\right\rceil+\left\lceil \frac{d-5}{2}\right\rceil=d-3. \tag*{\qed}
\]	\renewcommand{\qed}{}
\end{proof}

In order to prove the case $w=3$ we need to show that $\big\{T_b\colon b\in \tilde{B}_3\big\}$ is a basis of $W_3$. We first show linear independence:
\begin{Lemma}\label{lemma:w3Indep}
The polynomials $\big\{T_b\colon b\in \tilde{B}_3\big\}$ are linearly independent.
\end{Lemma}

\begin{proof}
We will show that the polynomials are linearly independent even after the following substitution:
\[
A_1=
\begin{pmatrix}
0 & 1 \\
1 & 1
\end{pmatrix}, \qquad
A_0=
\begin{pmatrix}
1 & 0 \\
0 & x
\end{pmatrix}.
\]
Then $W_3$ is spanned by the following polynomials:
\begin{gather*}
f_b := T_{110^b10^{d-b-3}}=x^{b}+x^{d-b-3} + 2x^{d-3}, \qquad b\in\left\{0,\dots,\left\lfloor \frac{d-3}{2}\right\rfloor\right\},\\
g_b := T_{1010^b10^{d-b-4}}=x^{b+1}+x^{d-b-3}+ x^{d-4} +x^{d-3},  \qquad b\in \left\{1,\dots,\left\lfloor \frac{d-4}{2}\right\rfloor\right\}.
\end{gather*}

We now simply have to put the coefficients of these polynomials in a matrix and show it has full rank.
For $d$ even the matrix of coefficients is given by
\[S=
\begin{pmatrix}
1      &       &   &   & \dots&&         &   &  & 3\\
      & 1     &   &        & &&         &  & 1 & 2\\
	   &       & 1 &        & &&         & 1 &  & 2\\
\vdots &       &   & \ddots & && \iddots &   &   & \vdots \\
0	   &       &   &        &1&1&         &  &   & 2 \\
0 & 0 &1      & 0 &   &  		 \dots		&      	&   &   2& 1\\
  &  &0  & 1 &  &        	 			&      	& 1 & 1 &1\\
\vdots  & &       &   & \ddots &   	    		&  	 \iddots 	&  & 	   & \vdots\\
0 & &       & \dots   &   &   	 	 2 		&     \dots    &   0  & 1 & 1
\end{pmatrix}
\]
and for $d$ odd, given by
\[S=
	\begin{pmatrix}
1 &   & & & & \dots & & & & 0 & 3\\
  & 1 &   &  & &      &  &        & 0 & 1 & 2\\
	   &       & 1 &&        & &&         & 1 & 0 & 2\\
\vdots &       &   & \ddots& & && \iddots &   &  \vdots & \vdots \\
       &       &   &       & 1&&1&&& 0&2 \\
	   &       &   &        &&2&         &   &  &0 & 2 \\
0&0 &1      & 0 &   &  		 &\dots	 &      	&  0&   2& 1\\
  & &0      & 1 & 0 &       &			&      	& 1 & 1 &1\\
  & &       &   & \ddots &  &   		&  	 \iddots 	&  & 	  \vdots & \vdots\\
 0&0 &       &   &     	 	& 1&1 		&         &     0& 1& 1
\end{pmatrix}.
\]
By elementary row operations, we can reduce the left upper part to a diagonal matrix of order~$\big\lfloor \frac{d-1}{2}\big\rfloor$. The left lower part is filled with zeros. The (rectangular) right lower block of dimension $\big\lfloor \frac{d-4}{2}\big\rfloor\times \big\lfloor \frac{d-2}{2}\big\rfloor$ can be put in the following upper triangular forms, for $d$ even and odd respectively,
 \begin{gather*}
 \begin{pmatrix}
0 	&	& \dots  & -1 & 2 & -1 \\
\vdots& 	&     -1   & 1 & 1 & -1 \\
&  	\iddots	& \iddots& 0 & 1 & -1 \\
 -1&  1  	&		 &\vdots&\vdots&\vdots\\
 2 &0& \dots &0&1&1
 \end{pmatrix} \to
 \begin{pmatrix}
 2 & 0 & \dots &&& 1 & 1\\
 0 & 2 & &&&3&-1\\
   &  & \ddots&&&5&-3\\
 \vdots & &&&&\vdots&\vdots\\
 0 &\dots&&0&2&*& *\\
 0 &\dots&&&0&*& *
 \end{pmatrix}\quad \text{for }d \text{ even},
\\
 \begin{pmatrix}
0 	&	& \dots  & -1 & 2 & -1 \\
\vdots& 	&       -1 & 1 & 1 & -1 \\
&  	\iddots	& \iddots& 0 & 1 & -1 \\
&    	&		 &\vdots&\vdots&\vdots\\
-1& 1 & &&&-1\\
1 &0& \dots &0&1&0
 \end{pmatrix} \to
\begin{pmatrix}
1       & 0 &\dots&     &0 &1 &0 \\
0       & 1 & 0       &\dots&0 &2 &-1\\
\vdots  & 0 &\ddots   &&&3&-2\\
 &  &&&&\vdots&\vdots\\
 &&&0&1&*&*\\
 &&&&0&*&*
 \end{pmatrix}\quad \text{for }d\text{ odd}.
 \end{gather*}
Both the obtained blocks have rank $\big\lfloor \frac{d-4}{2}\big\rfloor$. We have that the rank of $S$ is $\big\lfloor \frac{d-1}{2}\big\rfloor+\big\lfloor \frac{d-4}{2}\big\rfloor=d-3$, and this concludes the proof.
\end{proof}

We finish our proof by showing that $\big\{T_b\colon b\in \tilde{B}_3\big\}$ spans $W_3$:
\begin{Lemma}\label{lemma:w3Spanning}
	Every polynomial
	 $T_{10^a10^b10^c}=\Tr\big(A_1A_0^aA_1A_0^bA_1A_0^c\big)$, with $1< a\leq b\leq c$, $a+b+c=d-3$ is an element of the linear span $\big\langle T_b\colon b\in \tilde{B}_3\big\rangle$.
\end{Lemma}

\begin{proof}
Notice that the elements of $B_3(2,n)\setminus \tilde{B}_3$ can be written without loss of generality in the form
\[
10^a10^b10^c, \qquad \text{with }1< a\leq b\leq c.
\]
We use induction on $a$. If $a=0$ and $a=1$ then $\big(10^a10^b10^c\big)\in \tilde{B}_3$ and we are done.
If we substitute $A_1 \to A_1A_0^{a-1}$, $A_2 \to A_1A_0^{b}$ and $A_3 \to A_1$
in the equation given by Theorem~\ref{thm:linRel}, we get
\begin{gather*}
\Tr\big(A_1A_0^{a-1} A_1A_0^{b+1} A_1 A_0^c\big) +\Tr\big(A_1A_0^{b} A_1 A_0 A_1A_0^{a+c-1} \big)+ \Tr\big(A_1^2A_0^{a} A_1A_0^{b+c}\big)   \nonumber\\
\qquad{}=\Tr\big(A_1A_0^{a} A_1A_0^{b} A_1 A_0^c\big) + \Tr\big(A_1A_0^{b+1} A_1^2 A_0^{a+c-1} \big) + \Tr\big(A_1 A_0 A_1A_0^{a-1} A_1A_0^{b+c} \big).
\end{gather*}
Reordering the summands, we obtain
\begin{gather*}
T_{10^{a} 10^{b} 1 0^c} = \big(T_{10^{b} 1 0 10^{a+c-1} }+ T_{110^{a} 10^{b+c}} - T_{10^{b+1} 11 0^{a+c-1} } - T_{1 0 10^{a-1} 10^{b+c} }\big)+ T_{10^{a-1} 10^{b+1} 1 0^c}.
\end{gather*}
All terms in the parenthesis have as subscript an elements of $\tilde{B}_3$, and the last term is in $\big\langle\big\{T_b\colon b\in \tilde{B}_3\big\}\big\rangle$ by the induction hypothesis.
This concludes the proof.
\end{proof}

\subsection*{Acknowledgements}
The third author would like to thank Alessandra Bernardi, Jaros{\l}aw Buczy{\'n}ski, Joseph Landsberg, and Frank Verstraete for many helpful discussions. The second author is supported by FWO grants (G023721N and G0F5921N) and UGent BOF grants (BOF21/DOC/182 and STA/201909/038).

\pdfbookmark[1]{References}{ref}
\LastPageEnding

 \end{document}